\renewcommand{\epsilon}{\varepsilon}
\theoremstyle{plain}
\newtheorem{theorem}{Theorem}
\newtheorem{lemma}[theorem]{Lemma}
\newtheorem*{theorem*}{Theorem}
\newtheorem*{prop*}{Proposition}
\newtheorem*{corollary*}{Corollary}
\newtheorem*{lemma*}{Lemma}
\theoremstyle{definition}
\theoremstyle{remark}
\newtheorem{ex}[theorem]{Example}
\newtheorem{remark}[theorem]{Remark}
\numberwithin{equation}{section}
\numberwithin{theorem}{section}
\newcommand{\plus}{(+)}
\newcommand{\minus}{(-)}
\newcommand{\erho}{\rho}
\newcommand{\ephi}{\varphi}
\newcommand{\half}{\tfrac{1}{2}}
\begin{document}

\title{Ergoregions between two ergospheres}

\author{Gregory Eskin}
\address{Department of Mathematics, UCLA, Los Angeles, CA 90095-1555}
\email{eskin@math.ucla.edu}

\author{Michael A Hall}
\address{Department of Mathematics, USC, Los Angeles, CA 90089-2532}
\email{mikehall1000@gmail.com}

\maketitle

\begin{center}
In blessed memory of Miroslav L'vovich Gorbachuk
\end{center}

\begin{abstract}
For a stationary spacetime metric, black holes are spatial regions which disturbances may not propagate out of. In our previous work an existence and regularity theorem was proven for black holes in two space dimensions, in the case where the boundary of the ergoregion is a simple closed curve surrounding a singularity. In this paper we study the case of an annular ergoregion, whose boundary has two components.
\end{abstract}

\section{Introduction}

On $\mathbb{R}^{1+2} \cong \mathbb{R}^1_t \times \mathbb{R}^2_x$, let $g$ be a stationary pseudo-Riemannian (Lorentzian) metric with signature $(+1,-1,-1)$, and consider the associated wave equation $\Box_g u = 0$. 

As we take the general point of view of \emph{analogue spacetimes}, we largely ignore issues of coordinate invariance, working in the global system of coordinates $(t,x)$. For convenience we sometimes write $t = x^0$, $x = (x^1,x^2)$, and denote the corresponding components of the metric $g_{ij} = g_{ij}(x)$, $0 \leq i,j \leq 2$. Here, the assumption that $g$ is stationary means these depend only on $x$. 
Inverting the matrix of the $g_{ij}$ gives the components of the inverse metric, $g^{ij} = g^{ij}(x)$. We assume $g^{00}(x) > 0$ for all $x$, which corresponds to the natural time orientation.

The wave equation is then 
\begin{equation}\label{wave}
\Box_g u = \sum_{i,j=0}^2 \frac{1}{\sqrt{|g(x)|}} \partial_{x^i} [\sqrt{|g(x)|} g^{ij}(x) \partial_{x^j} u] = 0,
\end{equation}
where $|g(x)| = |\det[ g_{ij} ]_{i,j=0}^2 |$.

Denote by $\Omega$ the \emph{ergoregion}, which is the set of $x \in \mathbb{R}^2$ where $g_{00}(x) < 0$, i.e. $\partial_t$ is not timelike. In other words, the ergoregion is the region where the spatial part of the wave operator is not elliptic. Note that by Cramer's rule and our sign conventions we have $g_{00}(x) = g^{00}(x)\Delta(x)$, where $\Delta = g^{11}g^{22} - (g^{12})^2$ and $g^{00}(x) > 0$, so $\Omega = \{ \Delta(x) < 0 \}$. 

In [4] we discussed the case of an ergosphere surrounding a singularity. In this paper we consider the case of an annular domain $\Omega$, i.e. one whose boundary $\partial \Omega = \{ \Delta(x) = 0 \}$ consists of two nested Jordan curves, each of which is smooth in the sense that $\partial \Delta/\partial x \neq 0$ when $\Delta = 0$. Informally we say that there are ``two ergospheres''. Assume the components of the metric $g$ are defined in a fixed spatial neighborhood of $\overline{\Omega}$.

\subsection{Examples of spacetimes with two ergospheres}

An example of a metric in three space dimensions having two ergospheres is the celebrated Kerr metric, which we will write in Kerr-Schild coordinates [7],
[12],  [13].

The Hamiltonian in cylindrical coordinates $(\erho,\ephi,z)$ is given by 
\begin{align}
H = \tau^2 - \xi_\erho^2 - (\tfrac{1}{\erho}\xi_\ephi)^2 - \xi_z^2 + K(-\tau + \hat b \cdot \hat \xi)^2, 
\end{align}
where $\hat \xi = (\xi_\erho, \tfrac{1}{\erho}\xi_\ephi,\xi_z)$, $\hat b = (b_\erho,b_\ephi,b_z)$, with
\begin{align}
K = K(\erho,z) = \frac{2mr^3}{r^4 + a^2z^2}, \quad b_\erho = \frac{\erho r}{r^2 + a^2}, \quad b_\ephi = \frac{a \erho}{r^2 + a^2}, \quad b_z = \frac{z}{r}.
\end{align}
Here $r$ is defined by the relation
\begin{align}
\frac{\erho^2}{r^2 + a^2} + \frac{z^2}{r^2} = 1.
\end{align}
Therefore note that $b_\erho^2 + b_\ephi^2 + b_z^2 = 1$. 

For the Kerr metric it is well known that there are two ergospheres called the \textit{outer ergosphere} and \textit{inner ergosphere}, which occur where $K = 1$. There are also outer and inner horizons given by the equations $r = r_\pm = m \pm \sqrt{m^2 - a^2}$, assuming $0 < a < m$. 

Two examples of 2+1 spacetimes with two ergospheres may be obtained by reduction from the 3+1 dimensional Kerr metric:
\begin{itemize}
\item Set $z = 0$ and $\xi_z = 0$ to obtain the Hamiltonian
\begin{align}
H_1= \tau^2 - \xi_\erho^2 - (\tfrac{1}{\erho}\xi_\ephi)^2 + K(-\tau + b_\rho\xi_\rho + b_\varphi (\tfrac{1}{\erho}\xi_\ephi))^2, 
\end{align}
where $K = \frac{2m}{r} = \frac{2m}{\sqrt{\erho^2 - a^2}}$. This corresponds to the equatorial plane of the Kerr metric.

\item Since the Kerr Hamiltonian $H$ is independent of $\ephi$, we have that $\xi_\ephi$ is constant. If we set $\xi_\ephi = 0$, then we obtain the Hamiltonian
\begin{align}\label{eqn1.5}
H_2 = \tau^2 - \xi_\erho^2 - \xi_z^2 + K(-\tau + b_\erho\xi_\erho  + b_z\xi_z)^2
\end{align}
This corresponds to a reduction by rotational symmetry. 
\end{itemize}

Related to the first situation we consider so-called
 \textit{acoustic metrics} with Hamiltonians
\begin{align}
H = \left( \tau + A\xi_\erho + B(\tfrac{1}{\erho}\xi_\ephi) \right)^2 - \xi_\erho^2 - (\tfrac{1}{\erho}\xi_\ephi)^2
\end{align}
with $A = A(\erho)$, $B = B(\erho)$  (cf [11], [4]).
Here the ergosphere is where $A^2 + B^2 = 1$, and we consider the case where the ergoregion in an annular domain between $\erho = \erho_-$ and $\erho = \erho_+$.

Another example is the optical (Gordon) metric [8], [5], whose inverse metric tensor has components
\[
g^{ij} = \eta^{ij} + (n(x)^2 - 1)u^iu^j, 
\]
where $\eta^{ij}$ is the inverse of the Minkowski metric tensor with signature $(+1,-1,-1)$; $n(x)$ is the index of refraction, which describes the propagation of light in a moving dielectric; $(u^0,u^1,u^2,u^3) = (1-	\frac{|w|^2}{c^2})(1,\frac{w}{c})$ is the 4-velocity of the medium flow, with $w(x) = (w_1,w_2,w_3)$ the velocity of the dielectric; and $c$ is the speed of light in a vacuum.

The Gordon Hamiltonian is given by 
\begin{align}
\textstyle H = \tau^2 - \sum_{j=1}^3 \xi_j^2 + (n(x)^2 - 1)\left( \sum_{j=1}^3 u^j \xi_j \right)^2.
\end{align}
The plan of the paper is as follows. 
In Section 2 we review the role of so-called `zero-energy' null geodesics.
In Section 3 we consider the first example above of a reduction of the Kerr metric. 
In Section 4 we generalize the results of Section 3 to the case of acoustic metrics. 

In Section 5, we will consider acoustic metrics where we allow double roots and more than two horizons.
In Section 6, we give some results for a general metric on a 2+1 dimensional spacetime with two ergospheres, describing some possible behaviors 
of the zero energy null geodesics. In Section 7 we consider the case where both ergospheres are also horizons. 

\section{Zero energy null geodesics in the ergoregion}

On the cotangent space $T^* \mathbb{R}^{1+2}$ we use global coordinates $(t,x,\tau,\xi)$, often denoting $t = x^0$, $x = (x^1,x^2)$, $\tau = \xi_0$, 
and $\xi = (\xi_1,\xi_2)$.

As in the analysis of [2],  [4]  
the main idea will be to analyze the dynamics of the \emph{zero energy null geodesics}. Recall that for a point $x \in \Omega$, the forward light-cone at $x$ consists of null-bicharacteristics with increasing $t$. Its spatial projection is a cone based at $x \in \mathbb{R}^2$, whose edges correspond to null-bicharacteristics with $\tau = 0$ (thus `zero energy'). These edges may be described by pair of smooth, autonomous vector fields $X^\pm = X^\pm(x)$, $x \in \Omega$, which give the corresponding null-geodesic flow parameterized by $t$. 

Explicitly, let $\sigma(x,\xi) = \sum_{i,j=0}^2 g^{ij}(x) \xi_i\xi_j$ be the symbol 
of $\Box_g$, and consider a general bicharacteristic curve $(x,\xi) = (x(t),\xi(t)) \in T^* \mathbb{R}^{1+2}$, parameterized by $t$, i.e. 
\begin{align}\label{2.1}
\begin{split}
\frac{dx^i}{dt}
&= \frac{\partial_{\xi_i} \sigma(x,\xi) }{\partial_{\tau} \sigma(x,\xi)} 
= \frac{\sum_{j=0}^2 2g^{ij}(x)\xi_j}{\sum_{j=0}^2 g^{0j}(x) \xi_j}, \\
\frac{d\xi_i}{dt}
&= \frac{\partial_{x^i} \sigma(x,\xi)}{\partial_{\tau} \sigma(x,\xi)}
= \frac{\sum_{i,j=0}^2 \partial_{x^i} g^{ij}(x)\xi_i\xi_j}{\sum_{j=0}^2 g^{0j}(x) \xi_j}.
\end{split}
\end{align}
Note that as the metric is stationary, $\xi_0 = \tau$ is constant. Then for a null-bicharacteristic with $\tau = 0$ we have the characteristic equation 
\begin{align}
\sum_{i,j=1}^2 g^{ij}(x) \xi_i\xi_j = 0
\end{align}
We may solve for $\xi_1$ to obtain $\xi_1 = \frac{-g^{12} \pm \sqrt{-\Delta}}{g^{11}}\xi_2$, and substituting this relation and $\tau = 0$ into the above, we obtain 
\begin{align}\label{Xis}
\frac{dx^i}{dt} 
= 2 \frac{g^{i1} \frac{-g^{12} \pm \sqrt{-\Delta} }{g^{11}} + g^{i2}}{g^{01}\frac{-g^{12} \pm \sqrt{-\Delta} }{g^{11}} + g^{02}} 
= 2 \frac{g^{11}g^{i2}  - g^{12}g^{i1} \pm  g^{i1}\sqrt{-\Delta}}{g^{11}g^{02} - g^{12}g^{01} \pm   g^{01}\sqrt{-\Delta}} =: X^{\pm,i}(x).
\end{align}
Note that the choice of sign is arbitrary, but we may make a consistent choice throughout the ergoregion. 
To analyze the examples introduced in Section 1, we will study the dynamics of the vector fields $X^\pm$.

We recall from the analysis of [4] the following fact:
\begin{lemma}\label{horizonlemma}
If $\gamma$ is a limit cycle for one of the vector fields $X^\pm$, then $\gamma$ is a horizon. 
\end{lemma}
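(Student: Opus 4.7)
The goal is to show that a limit cycle $\gamma$ of $X^+$ (the case of $X^-$ is identical) gives rise to a null hypersurface $\Sigma := \mathbb{R}_t \times \gamma$ in spacetime that is ruled by zero-energy null geodesics---precisely the horizon property used in [4]: the cylindrical lift is a characteristic surface that nearby zero-energy null geodesics cannot escape through.

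Since $\gamma$ is a limit cycle, it is a smooth closed integral curve of $X^+$; parameterize it so that $\gamma'(s) = X^+(\gamma(s))$. By the construction of $X^+$ from (2.1) with $\tau = 0$ leading to (2.3), the spacetime curves $s \mapsto (s, \gamma(s))$ are exactly zero-energy null bicharacteristics, so $\Sigma$ is ruled by such geodesics. To check that $\Sigma$ is a null hypersurface, it suffices to verify that the induced metric is degenerate. Use the tangent basis $\partial_t$ and $v = (1, X^+)$. By construction $v$ is null: $g(v,v) = 0$. Moreover, from (2.1) the contravariant velocity satisfies $v^\mu \propto g^{\mu\nu}\xi_\nu$, hence
\[
g(\partial_t, v) = g_{0\mu}v^\mu \ \propto\ g_{0\mu}g^{\mu\nu}\xi_\nu = \delta_0^{\,\nu} \xi_\nu = \tau = 0.
\]
Thus in the basis $(\partial_t, v)$ only the $(\partial_t,\partial_t)$ entry $g_{00}$ survives; the Gram matrix is degenerate, and $\Sigma$ is null.

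The trapping property defining a horizon then comes essentially for free from the definition of a limit cycle: on at least one side of $\gamma$, nearby $X^+$-orbits spiral toward $\gamma$ asymptotically (as $t \to +\infty$ or $t \to -\infty$) and never cross it. Lifted to spacetime, this says zero-energy null geodesics approaching $\Sigma$ from the appropriate side are trapped against $\Sigma$ and cannot reach the other side. The one technical step I expect to be the main obstacle is the Gram-determinant computation above---together with the verification that what we call a ``horizon'' in the sense of [4] is indeed captured by the ``null cylindrical lift of a closed $X^\pm$-orbit plus one-sided trapping'' description. Once these definitional matters are aligned, the lemma follows directly from the existence of the closed orbit.
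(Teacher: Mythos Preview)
The paper does not actually prove this lemma; it merely recalls it from reference~[4] (Eskin--Hall, \emph{Inverse Problems} 2016). So there is no in-paper argument to compare against, only the original source.

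Your null-hypersurface computation is correct and is in fact the heart of the matter as treated in~[4]: with $v^\mu \propto g^{\mu\nu}\xi_\nu$ and $\tau=0$ one gets $g(v,v)=0$ and $g(\partial_t,v)=\xi_0=0$, so the Gram matrix on $\Sigma=\mathbb{R}_t\times\gamma$ degenerates and $\Sigma$ is null (equivalently, characteristic for $\Box_g$). That alone already yields the one-way membrane property for \emph{all} future-directed causal curves, which is the definition of horizon used in the paper (cf.\ Remark~5.2). In the spatial picture this is the statement that along $\gamma$ one edge $X^+$ of the projected forward light cone is tangent to $\gamma$, so the entire cone lies to one side.

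Where your emphasis is slightly off: you locate the ``trapping property defining a horizon'' in the limit-cycle behavior of nearby $X^+$-orbits. That argument by itself is insufficient---it addresses only one family of null geodesics, not general causal curves---and it is also unnecessary, since the null-hypersurface computation you already did handles everything. Indeed, the conclusion holds for \emph{any} closed integral curve of $X^\pm$, limit cycle or not; the limit-cycle hypothesis is relevant in the paper only because Poincar\'e--Bendixson is the mechanism by which such closed orbits are produced (see \S6). So your plan succeeds once you recognize that the Gram-determinant step is the whole proof, and drop the separate appeal to asymptotic spiraling.
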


\section{The case of the equatorial plane for the Kerr metric}

We consider our first example of a 2+1 spacetime obtained by reduction from the Kerr metric.
Recall that we set $z = 0$ and $\xi_z = 0$ to obtain the Hamiltonian
\begin{align}\label{2.1}
H = \tau^2 - \xi_\erho^2 - (\tfrac{1}{\erho}\xi_\ephi)^2 + K(-\tau + b_\erho\xi_\erho + b_\ephi (\tfrac{1}{\erho}\xi_\ephi))^2, 
\end{align}
where $K = \frac{2m}{r} = \frac{2m}{\sqrt{\erho^2 - a^2}}$. We consider the region $a < \erho < \sqrt{4m^2 + a^2}$. 

We calculate
\begin{align*}
\half H_{\tau} &= \tau - K(-\tau + b_\erho \xi_\erho + b_\ephi  (\tfrac{1}{\erho}\xi_\ephi))\\
\half H_{\xi_\erho} &= -\xi_\erho + K(-\tau+b_\erho \xi_\erho + b_\ephi  (\tfrac{1}{\erho}\xi_\ephi))b_\erho \\
\half H_{\xi_\ephi} &= -\tfrac{1}{\erho}(\tfrac{1}{\erho}\xi_\ephi) + K(-\tau + b_\erho\xi_\erho +  b_\ephi(\tfrac{1}{\erho}\xi_\ephi)) (\tfrac{1}{\erho}b_\ephi),
\end{align*}
and therefore the zero energy null geodesic flow is given by 
\begin{align}\label{2.2}
\begin{split}
\frac{d\erho}{dt} 
&= \frac{-\xi_\erho + K(b_\erho \xi_\erho + b_\ephi (\tfrac{1}{\erho}\xi_\ephi))b_\erho}{-K(b_\erho \xi_\erho + b_\ephi (\tfrac{1}{\erho}\xi_\ephi))}\\
\frac{d\ephi}{dt} 
&= \frac{-\tfrac{1}{\erho}(\tfrac{1}{\erho}\xi_\ephi) + K(b_\erho\xi_\erho + b_\ephi (\tfrac{1}{\erho}\xi_\ephi))(\tfrac{1}{\erho}b_\ephi)}{-K(b_\erho \xi_\erho + b_\ephi (\tfrac{1}{\erho}\xi_\ephi))}
\end{split}
\end{align} 
The characteristic equation is
\begin{align}
(Kb_\erho^2 - 1) \xi_\erho^2 + 2K b_\erho b_\ephi \xi_\erho(\tfrac{1}{\erho}\xi_\ephi) + K(b_\ephi^2 - 1) (\tfrac{1}{\erho}\xi_\ephi)^2 = 0.
\end{align}
Solving for $\xi_\erho$ in terms of $\xi_\ephi$ yields
\begin{align}\label{2.4}
\begin{split}
\xi_\erho 
&= \frac{- Kb_\erho b_\ephi \pm \sqrt{K^2b_\erho^2 b_\ephi^2 - (Kb_\erho^2 - 1)(Kb_\ephi^2 - 1)}}{Kb_\erho^2 - 1} (\tfrac{1}{\erho}\xi_\ephi) \\
&= \frac{- Kb_\erho b_\ephi \pm \sqrt{K(b_\erho^2 + b_\ephi^2) - 1}}{Kb_\erho^2 - 1} (\tfrac{1}{\erho}\xi_\ephi)
\end{split}
\end{align}

The equation for zero energy null bicharacteristics becomes
\begin{align}\label{eqn2.5}
\begin{split}
\frac{d\erho^\pm}{dt} &= \frac{\pm(Kb_\erho^2-1)\sqrt{K(b_\erho^2 + b_\ephi^2) -1}}{\mp K b_\rho\sqrt{K(b_\erho^2 + b_\ephi^2) - 1} - K b_\ephi}.
\end{split}
\end{align}

The $\plus$ family are the solutions for the positive square root, and similarly for the $\minus$ family.

\begin{theorem}
The $\plus$ family produces two horizons, while
and the $\minus$ family does not produce any.
\end{theorem}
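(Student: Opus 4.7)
The plan is to use the special structure of the Kerr metric in the equatorial plane to rewrite \eqref{eqn2.5} as an explicit function of $r$ alone, and then detect horizons by locating zeros and sign changes of this function.

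First I would exploit $b_z=0$ (from $z=0$) together with the identity $b_\erho^2 + b_\ephi^2 + b_z^2 = 1$ recorded in Section 1 to collapse the inner radical in \eqref{eqn2.5} to $\sqrt{K-1}$. Using $\erho^2 = r^2+a^2$, $b_\erho = r/\erho$, $b_\ephi = a/\erho$, and $K=2m/r$, a short substitution shows that
$$Kb_\erho^2 - 1 = -\frac{(r-r_+)(r-r_-)}{\erho^2},$$
so the numerator of \eqref{eqn2.5} vanishes precisely at the two Kerr horizon radii.

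For the $\plus$ family I would next check that the denominator of \eqref{eqn2.5} simplifies to a nonzero multiple of $r\sqrt{K-1} + a$, which is strictly positive throughout the ergoregion $0<r<2m$. Hence $d\erho^+/dt$ vanishes exactly at $r = r_\pm$ and changes sign as $r$ crosses each; by axisymmetry the circles $r=r_\pm$ are then invariant curves of $X^+$, hence limit cycles, and Lemma \ref{horizonlemma} identifies them as horizons.

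The main obstacle is the $\minus$ family, because the denominator of \eqref{eqn2.5} in this case is a multiple of $r\sqrt{K-1} - a$, which also vanishes at $r = r_\pm$: the horizon equation $r^2 - 2mr + a^2 = 0$ is exactly $r\sqrt{K-1} = a$. Taken at face value this puts $d\erho^-/dt$ into $0/0$ form at the horizon circles and leaves open the possibility of spurious limit cycles there. I would resolve this by factoring via the identity
$$(r\sqrt{K-1}-a)(r\sqrt{K-1}+a) = -(r-r_+)(r-r_-),$$
which cancels the common zero factor and yields an explicit expression for $d\erho^-/dt$ that is strictly negative on the whole ergoregion. Monotonicity of $\erho$ along every $X^-$ trajectory then rules out limit cycles, and shows in passing that $X^-$ crosses each horizon circle transversally rather than limiting to it.
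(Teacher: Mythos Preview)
Your proposal is correct and follows essentially the same approach as the paper: for the $\plus$ family you show the denominator is nonvanishing and the numerator vanishes exactly at $Kb_\erho^2=1$, i.e.\ $r=r_\pm$; for the $\minus$ family you rationalize the denominator (multiply by the conjugate) so that the common factor $Kb_\erho^2-1$ cancels, leaving a strictly negative expression for $d\erho^-/dt$. The only difference is cosmetic --- you carry the substitution through to explicit formulas in $r$ and $a$, whereas the paper keeps the computation in terms of $K$, $b_\erho$, $b_\ephi$ --- but the argument is the same.
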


\begin{proof}
With the $\plus$ family, the denominator of (3.5) does not vanish, while  the numerator is zero when $Kb_\erho^2 - 1 = 0$. Since we have $K = \frac{2m}{r}$, $b_\erho = \frac{r}{\erho}$, this happens when $r = m \pm \sqrt{m^2 - a^2}$. These zeros are limiting cycles, and the limiting cycle is an horizon in each case. 

With the $\minus$ family, since the denominator vanishes we reorganize (3.5) to get
\begin{align}
\frac{d\erho^-}{dt} = \frac{-\sqrt{K(b_\erho^2 + b_\ephi^2) - 1}(Kb_\erho \sqrt{K(b_\erho^2 + b_\ephi^2) - 1} + K b_\ephi)}{K^2(b_\erho^2 + b_\ephi^2)}
\end{align}
which is negative everywhere in the ergoregion. Thus $\erho^-(t)$ is strictly decreasing, and there is no horizon generated by the $\minus$ family. 

\end{proof}

\section{Acoustic metrics I}

We assume a metric of the form 
\begin{align}
H = (\tau + A\xi_\erho + B(\tfrac{1}{\erho}\xi_\ephi))^2 - \xi_\erho^2 - (\tfrac{1}{\erho}\xi_\ephi)^2
\end{align}
with $A = A(\erho)$, $B = B(\erho)$. The ergospheres are where $A^2 + B^2 = 1$. We assume that this happens at $\erho = \erho_-$ and $\erho = \erho_+$, and that $A^2 + B^2 > 1$ on the interval $\erho \in (\erho_-,\erho_+)$.

We calculate 
\begin{align}
\begin{split}
\half H_{\tau} &= (\tau + A\xi_\erho + B(\tfrac{1}{\erho}\xi_\ephi)) \\
\half H_{\xi_\erho} &= (\tau + A\xi_\erho + B(\tfrac{1}{\erho}\xi_\ephi))A - \xi_\erho\\
\half H_{\xi_\ephi} &= (\tau + A\xi_\erho + B(\tfrac{1}{\erho}\xi_\ephi))\tfrac{1}{\erho}B - \tfrac{1}{\erho}(\tfrac{1}{\erho}\xi_\ephi).
\end{split}
\end{align}

Thus  for $\tau=0$
\begin{align}
\begin{split}
\frac{d\rho}{d t}=\frac{(A\xi_\rho +B\frac{\xi_\varphi}{\rho})A-\xi_\rho}{A\xi_\rho+B\frac{\xi_\varphi}{\rho}}\\
\frac{d\varphi}{d t}=\frac{(A\xi_\rho +B\frac{\xi_\varphi}{\rho})\frac{B}{\rho}-\frac{\xi_\varphi}{\rho^2}}{A\xi_\rho+B\frac{\xi_\varphi}{\rho}}.
\end{split}
\end{align}

We get the characteristic equation
\begin{align}\nonumber
(A^2 - 1)\xi_\erho^2 + 2AB\xi_\erho (\tfrac{1}{\erho}\xi_\ephi) + (B^2 - 1) (\tfrac{1}{\erho}\xi_\ephi)^2 = 0,
\end{align}
which leads to
\begin{align}
\xi_\erho = \frac{-AB \pm \sqrt{A^2 + B^2 - 1}}{A^2 - 1} (\tfrac {1}{\erho}\xi_\ephi).
\end{align}

We assume there are two values $\erho = \erho_i$, $i = 1,2$, such that $\erho_- < \erho_1 < \erho_2 < \erho_+$ and $A(\erho_i) = -1$, and we then claim that these are the horizons.

Let start with that $A < 0$ and $B > 0$. 
Substituting  (4.4)  into (4.3),  we get
\begin{align}\label{eqn3.5}
\begin{split}
\frac{d\erho^+}{dt} &= \frac{(A^2 -1) \sqrt{A^2 + B^2 - 1}}{A\sqrt{A^2 + B^2 -1} - B} \\
\frac{d\ephi^+}{dt} &= \frac{1}{\erho} \cdot \frac{\sqrt{A^2 + B^2 - 1}(AB - \sqrt{A^2 + B^2 - 1})}{A\sqrt{A^2 + B^2 - 1} - B}
\end{split}
\end{align}
and
\begin{align}\label{eqn3.6}
\begin{split}
\frac{d\erho^-}{dt} &= \frac{A(A^2 + B^2 - 1) - B\sqrt{A^2 + B^2 - 1}}{A^2 + B^2} \\
\frac{d\ephi^-}{dt} &= \frac{1}{\erho} \cdot\frac{\sqrt{A^2+B^2-1}(-AB-\sqrt{A^2 + B^2 -1})}
{-A\sqrt{A^2+B^2-1}-B}
\\
 &= \frac{1}{\erho} \cdot\frac{(A\sqrt{A^2 + B^2 -1}-B)(B^2-1)}{(-AB+\sqrt{A^2 + B^2 -1})(A^2 + B^2)}
\end{split}
\end{align}

To derive the above we use that 
\begin{align}
\begin{split}
&(A\sqrt{A^2 + B^2 - 1} - B)(-A\sqrt{A^2 + B^2 - 1} - B) = -(A^2 + B^2)(A^2 - 1),
\\
 &(-AB+\sqrt{A^2 + B^2 -1})(-AB-\sqrt{A^2 + B^2 -1})=(A^2-1)(B^2-1).
\end{split}
\end{align}

\begin{theorem}
Suppose $A < 0$ and $B > 0$. Then the $\plus$ family generates two horizons at $\erho = \erho_1$ and $\erho = \erho_2$, while the $\minus$ family generates no horizons.
\end{theorem}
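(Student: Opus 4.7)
My plan is to read the result off directly from the closed-form expressions for $d\erho^\pm/dt$ in \eqref{eqn3.5}--\eqref{eqn3.6}, using Lemma~\ref{horizonlemma} to pass from limit cycles to horizons. Since $A$ and $B$ depend only on $\erho$, each $d\erho^\pm/dt$ is a function of $\erho$ alone, so a closed orbit is forced to be a circle $\erho=\mathrm{const}$ on which $d\erho^\pm/dt$ vanishes, and an isolated such circle is automatically a limit cycle.

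For the $\plus$ family, I would first check that the denominator $A\sqrt{A^2+B^2-1}-B$ of \eqref{eqn3.5} is strictly negative throughout the ergoregion: both $A\sqrt{A^2+B^2-1}$ and $-B$ are negative under the hypotheses $A<0$, $B>0$. So $X^+$ is smooth there. The numerator of $d\erho^+/dt$ vanishes precisely when $A^2=1$, which with $A<0$ means $A=-1$, i.e., $\erho\in\{\erho_1,\erho_2\}$. To confirm these circles are genuine periodic orbits rather than equilibria, I would substitute $A=-1$ and $\sqrt{A^2+B^2-1}=B$ into the expression for $d\ephi^+/dt$ in \eqref{eqn3.5}, which gives $\tfrac{B}{\erho}>0$, so the flow winds around each circle at strictly positive angular speed. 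Since $d\erho^+/dt$ vanishes only at the two isolated values $\erho_1,\erho_2$, both closed orbits are isolated, hence limit cycles, and Lemma~\ref{horizonlemma} then identifies them as horizons.

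For the $\minus$ family, the goal is to show $d\erho^-/dt<0$ strictly throughout the ergoregion, which precludes any closed orbit and hence any limit cycle. Using the first line of \eqref{eqn3.6}, the numerator $A(A^2+B^2-1)-B\sqrt{A^2+B^2-1}$ is a sum of two strictly negative terms under $A<0$, $B>0$, and $A^2+B^2-1>0$, while the denominator $A^2+B^2$ is strictly positive. Thus $\erho^-(t)$ is strictly decreasing along every trajectory, so no periodic orbit can exist and, a fortiori, no horizon is produced by the $\minus$ family.

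The main obstacle is really a matter of bookkeeping rather than a substantive difficulty: one must verify that the choice of square-root branch gives globally well-defined smooth vector fields $X^\pm$ in the ergoregion, which amounts to showing the two denominators in \eqref{eqn3.5}--\eqref{eqn3.6} do not vanish under the sign hypotheses. Once this is in hand, the theorem reduces to the transparent sign computations for the numerators outlined above.
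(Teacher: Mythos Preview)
Your proposal is correct and follows essentially the same route as the paper, which simply refers back to the Kerr computation (Theorem~3.1): for the $\plus$ family the denominator in \eqref{eqn3.5} is nonvanishing and the numerator vanishes exactly where $A=-1$, giving the two limit cycles; for the $\minus$ family the rewritten expression \eqref{eqn3.6} is strictly negative on the open ergoregion, so $\erho^-(t)$ is strictly decreasing and no limit cycle exists. Your extra check that $d\ephi^+/dt=\tfrac{1}{\erho}B>0$ at $A=-1$ (so the circles are genuine periodic orbits, not equilibria) is a detail the paper only makes explicit later in \eqref{6.3}.
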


The proof is the same as the proof of Theorem 2.1, with the horizons $\erho = \erho_1$, $\erho = \erho_2$ generated by the $\plus$ family when 
$A(\erho_1) = A(\erho_2) = -1$, and no horizons generated by the $\minus$ family. 

Consider now the case when $A < 0$ and $B < 0$. Then analogous to 
(4.6),  (4.7)   we obtain
\begin{align}
\begin{split}
\frac{d\erho^+}{dt} &= \frac{A(A^2 + B^2 - 1) + B\sqrt{A^2 + B^2 - 1}}{A^2 + B^2} \\
\frac{d\ephi^+}{dt} &= \frac{1}{\erho} \cdot\frac{(A\sqrt{A^2 + B^2 - 1}+B)(B^2-1)}{(-AB - \sqrt{A^2 + B^2 - 1})(A^2 + B^2)}
\end{split}
\end{align}
and
\begin{align}
\begin{split}
\frac{d\erho^-}{dt} &= \frac{-(A^2-1)\sqrt{A^2 + B^2 - 1}}{-A\sqrt{A^2 + B^2 - 1} - B} \\
\frac{d\ephi^-}{dt} &= \frac{1}{\erho} \cdot \frac{-AB\sqrt{A^2 + B^2 - 1} - (A^2 + B^2 - 1)}{-A\sqrt{A^2 + B^2 - 1} - B}
\end{split}
\end{align}

Thus we have:

\begin{theorem}
Suppose $A < 0$ and $B < 0$. Then the $\plus$ family generates no horizons 
since  $\frac{d\rho^+}{dt}<0$  on  $(\rho_-,\rho_+)$, while the $\minus$ family generates two horizons at $\erho = \erho_i$, $i = 1,3$ where $A(\erho_1) = A(\erho_2) = -1$.
\end{theorem}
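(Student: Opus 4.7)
The proof should mirror the structure of the proofs of Theorems 2.1 and 4.1. The plan is to read off the sign of $d\erho^+/dt$ directly from (4.8) to rule out any limit cycles in the $\plus$ family, and to locate the zeros of $d\erho^-/dt$ from (4.9) to produce the two horizons in the $\minus$ family via Lemma \ref{horizonlemma}. The key structural fact we use throughout is that inside the ergoregion $\erho \in (\erho_-,\erho_+)$ we have $A^2 + B^2 - 1 > 0$, with equality at the two boundary circles.

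For the $\plus$ family, I would argue that under $A<0$ and $B<0$ the numerator of $d\erho^+/dt$ in (4.8), namely $A(A^2+B^2-1) + B\sqrt{A^2+B^2-1}$, is a sum of two strictly negative quantities: the first because $A<0$ and $A^2+B^2-1>0$, and the second because $B<0$ and the square root is positive. The denominator $A^2+B^2$ is strictly positive. Hence $d\erho^+/dt<0$ throughout $(\erho_-,\erho_+)$, $\erho^+(t)$ is strictly decreasing, and no invariant circle (in particular no limit cycle) can arise from $X^+$.

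For the $\minus$ family I would work from (4.9). The denominator $-A\sqrt{A^2+B^2-1} - B$ is strictly positive since both $-A>0$ and $-B>0$. The numerator $-(A^2-1)\sqrt{A^2+B^2-1}$ vanishes exactly where $A^2=1$. Because $A<0$, this forces $A=-1$, which by hypothesis happens only at $\erho=\erho_1$ and $\erho=\erho_2$. Since $A$ and $B$ depend only on $\erho$, the circles $\erho=\erho_i$ are invariant under the flow of $X^-$, and the sign analysis of the numerator (positive near the ergosphere boundaries where $|A|<1$, negative on the interval in between where $|A|>1$) shows that nearby trajectories accumulate onto them. Thus each $\erho=\erho_i$ is a limit cycle of $X^-$, and Lemma \ref{horizonlemma} identifies them as horizons.

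The only step requiring minor care is the sign of $A^2-1$ away from $\erho_1,\erho_2$, i.e.\ confirming that $|A|>1$ strictly between the two zeros and $|A|<1$ on the two collar intervals adjacent to the ergosphere; this follows because $A^2+B^2=1$ on $\partial\Omega$ with $B\neq 0$, forcing $|A|<1$ there, combined with the fact that $A^2+B^2>1$ in the interior. I expect this to be the main (and essentially the only) obstacle, and it is purely a matter of verifying monotonicity/boundary behavior of $A$, parallel to the analogous step in Theorem 4.1.
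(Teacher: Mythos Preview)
Your proposal is correct and follows essentially the same approach as the paper, which simply presents equations (4.8) and (4.9) and states the theorem as an immediate consequence, parallel to Theorems 2.1 and 4.1. If anything, your sign analysis of the numerators and denominators and your discussion of why the circles $\erho=\erho_i$ are genuine limit cycles are more detailed than what the paper actually writes down.
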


In the case when $A < 0$ and $B$ changes sign between the two roots of $A = -1$, we have:

\begin{theorem}
Suppose $A(\erho_1) = A(\erho_2) = -1$ and $\erho_0 \in (\erho_1,\erho_2)$ is such that $B(\erho_0) = 0$, with $B(\erho) > 0$ when $\erho < \erho_0$ and $B(\erho) < 0$ when $\erho > \erho_0$.
Then the $\plus$ family produces one horizon, and the $\minus$ family also produces one horizon. 
\end{theorem}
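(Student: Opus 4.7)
The plan is to decompose the ergoregion along the circle $\erho = \erho_0$ and apply the sign analyses of Theorems 4.1 and 4.3 piecewise. Set $R_1 = \{\erho_- < \erho < \erho_0\}$, where $A < 0$ and $B > 0$, and $R_2 = \{\erho_0 < \erho < \erho_+\}$, where $A < 0$ and $B < 0$. By hypothesis $\erho_1$ is the unique zero of $A+1$ in $R_1$, and $\erho_2$ is the unique such zero in $R_2$.

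In $R_1$ the argument from Theorem 4.1 applies to the formulas (4.5)--(4.6): the denominator of $d\erho^+/dt$ in (4.5) is strictly negative, and its numerator $(A^2-1)\sqrt{A^2+B^2-1}$ vanishes only at $\erho_1$, producing a limit cycle of the $\plus$ flow there. Meanwhile $d\erho^-/dt$ from (4.6) is strictly negative throughout $R_1$, so the $\minus$ family has no limit cycle in $R_1$. In $R_2$ the argument from Theorem 4.3 applies to (4.8)--(4.9): $d\erho^+/dt$ is strictly negative throughout $R_2$, giving no $\plus$ limit cycle, while the numerator of $d\erho^-/dt$ vanishes only at $\erho_2$ (with strictly positive denominator), producing a $\minus$ limit cycle there.

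To combine the two halves, I verify that the $\plus/\minus$ labeling is consistent across $\erho = \erho_0$. Because $A^2 + B^2 > 1$ throughout the annulus, $\sqrt{A^2+B^2-1}$ is smooth at $\erho_0$, so the choice of sign in (4.4) defines the same two families on both sides. Evaluating at $B = 0$ gives $d\erho^\pm/dt = (A^2-1)/A < 0$ for both families (since $A(\erho_0)^2 > 1$ and $A < 0$), so each flow crosses $\erho_0$ transversally in the direction of decreasing $\erho$ and creates no new invariant circle. Hence the $\plus$ family has exactly one limit cycle, at $\erho_1$, and the $\minus$ family has exactly one, at $\erho_2$; by Lemma \ref{horizonlemma} each is a horizon.

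The main obstacle I anticipate is precisely this last point: one must be sure that the $\plus$ family of Theorem 4.1 (used in $R_1$) and the $\plus$ family of Theorem 4.3 (used in $R_2$) refer to the \emph{same} vector field on the whole ergoregion. This is not immediate from the displayed formulas, which look structurally different---one uses (4.5) while the other uses the rationalized (4.8)---but it follows from the fact that both are algebraic rearrangements of the single expression obtained by substituting the $+\sqrt{A^2+B^2-1}$ root from (4.4) into (4.3), and analogously for $\minus$.
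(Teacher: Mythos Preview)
Your approach is essentially identical to the paper's: split the annulus at $\erho_0$ into a region with $B>0$ and a region with $B<0$, then apply the sign analyses of Theorems 4.1 and 4.2 on the two pieces. Your discussion of the consistency of the $\plus/\minus$ labeling across $\erho_0$ is a welcome refinement the paper omits; note however that in your second paragraph you should cite Theorem 4.2 (the $A<0$, $B<0$ case, to which equations (4.8)--(4.9) belong), not Theorem 4.3, which is the very statement you are proving.
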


Proof: Split into the intervals $(\erho_-,\erho_0)$ and $(\erho_0,\erho_+)$. Each subinterval contains one root of $A = -1$. By the analysis in Theorem 4.1 the $\plus$ family generates an horizon in the first subinterval while the $\minus$ family does not, and by the analysis in Theorem 4.2 the $\minus$ family generates an horizon in the second subinterval while the $\plus$ family does not. This proves Theorem 4.3.

\section{Acoustic metrics II}

In this section we wish to consider cases when $A = -1$ has more than two roots, or when it has roots of higher multiplicity. 

\begin{ex}
The case when $a = m$ is called the extremal Kerr metric. In this case the outer and inner horizons coincide in a single horizon $r= m$.
\end{ex}
 
More generally consider the case when $A = - 1$ has a double root $A(\erho_1) = -1$, $A'(\erho_1) = 0$, with say $A''(\erho_1) > 0$. 
$A(\erho) +1=C(\rho)(\rho-\rho_1)^2,\ C(\erho)>0$.   Suppose  $B>0$.

We have that for equation (4.5), $\erho^+(t) \equiv \erho_1$ is a solution and therefore a horizon. Also $\frac{d\erho^+}{dt} > 0$ when
 $\erho \neq \erho_1$, so $\erho^+(t)$ will spiral toward $\erho_1$ from below as $t \to +\infty$, or from above when $t \to -\infty$. Note that when $\erho$ approaches $\erho_1$, the trajectory spirals as a power instead of logarithmically as in the case of a simple root.

Now consider the case when $A = -1$ has more than two simple roots. There must be an even number $2m$ roots $\erho_{1}, \erho_{2}$, \ldots, $\erho_{2m-1},\erho_{2m}$. 

Take $\erho_{j},\erho_{j+1}$. If $B > 0$ on $[\erho_{j},\erho_{j+1}]$ then this is the same as the case of Theorem 4.1, so $\erho = \erho_j$, $\erho = \erho_{j+1}$ are horizons coming from the $\plus$ family, while there are no horizons from the $\minus$ family in this interval. If $B < 0$ then as in Theorem 4.2 $\erho = \erho_j$, $\erho = \erho_{j+1}$ are horizons from the $\minus$ family while there are none for the $\plus$ family. If $B$ changes sign then one of $\erho = \erho_j$, $\erho = \erho_{j+1}$ is a horizon from the $\plus$ family, and the other a horizon from the $\minus$ family as in Theorem 4.3.

If $A$ is always positive, we have the same picture when $A(\rho) = 1$, and the horizons are white.

\begin{remark} 
Note that our definition of a horizon is local. We say that a horizon is a black hole horizon if null geodesics cannot move from inside to outside, and a white hole horizon if they cannot move from outside to inside. 
\end{remark}

\section{The general case}

In this section we obtain several results in the general case.  As in section 2 we consider a general metric $g$ and we assume there are two ergospheres $\Gamma_1$ and $\Gamma_2$. 

We assume the boundary is never characteristic, so that the $\plus$ and $\minus$ families are transversal to the boundary. It follows from transversality that as $t$ increases one family starts on each $\Gamma_i$ and the other ends, $i = 1,2$.

We have all permutations of $\plus$ and $\minus$ starting and ending on $\Gamma_1$ and $\Gamma_2$. For concreteness assume $\plus$ starts on $\Gamma_1$ and $\minus$ ends there. For $\Gamma_2$, we can have that $\plus$ starts and $\minus$ ends, or the opposite. 

Let $\plus$ starts on both $\Gamma_1$ and $\Gamma_2$. Then 
any $\plus$ tracjectory starting on $\Gamma_1$  cannot end on $\Gamma_2$.  Thus
by the Poincar\' e-Bendixson theorem  (cf.  [6]), it approaches some   limit cycle $\gamma_1^+$, 
which must be a horizon by Lemma 2.1. 
Also   any  $\plus$  trajectory  starting  on $\Gamma_2$  approaching  some  limit  cycles  $\gamma_2^+$.   Note
$\gamma_1^+$  and  $\gamma_2^+$  may  coincide.   Similarly  if $\minus$ family ends both on  $\Gamma_1$  and   $\Gamma_2$   there is  at  least 
one limiting  cycle  belonging   to $\minus$  family.  Thus we get
\begin{theorem}
Suppose one of the families starts on both $\Gamma_1$ and $\Gamma_2$. Then to each of the $\plus$ and $\minus$ families corresponds at least one event horizon. 
\end{theorem}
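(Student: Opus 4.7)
The plan is to make rigorous the outline already sketched above. By hypothesis one of the families --- say the $\plus$ family --- starts on both $\Gamma_1$ and $\Gamma_2$; by transversality the $\minus$ family then ends on both. I would treat the $\plus$ family first and handle the $\minus$ family by time reversal.

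First I would fix a point $p \in \Gamma_1$ and follow the forward $\plus$-trajectory $\gamma^+(t)$ starting at $p$. The key step is to show that $\gamma^+(t)$ is trapped in a compact subset of the \emph{open} ergoregion $\Omega$ for all $t \ge 0$. Using transversality I would produce a collar neighborhood $U$ of $\partial\Omega = \Gamma_1 \cup \Gamma_2$ on which $X^+$ has a uniformly positive inward normal component. Then $\gamma^+$ cannot leave $\overline{\Omega}$: if it approached a point $q \in \Gamma_i$ from inside, the tangent to $\gamma^+$ at $q$ would be $X^+(q)$, which points inward, ruling out an outward approach. The same collar estimate shows that the $\omega$-limit set $\omega(\gamma^+)$ cannot meet $\Gamma_1 \cup \Gamma_2$, since any boundary accumulation point would force trajectory points in a full neighborhood to flow uniformly inward in $U$, contradicting accumulation.

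Next, with $\omega(\gamma^+)$ a non-empty compact subset of the planar open annular domain $\Omega$, I would invoke the Poincar\'e-Bendixson theorem. Under the standard assumption that $X^+$ has no stationary point in $\Omega$, the theorem forces $\omega(\gamma^+)$ to be a periodic orbit $\gamma_1^+$, and Lemma \ref{horizonlemma} identifies $\gamma_1^+$ as a horizon for the $\plus$ family. For the $\minus$ family I would apply the same argument to the reversed field $-X^-$, which by hypothesis points inward on both boundary components; a forward $(-X^-)$-trajectory starting on $\Gamma_1$ is trapped by the same collar argument, Poincar\'e-Bendixson yields a periodic orbit of $X^-$ (an $\alpha$-limit cycle in the original time direction), and Lemma \ref{horizonlemma} again identifies it as a horizon.

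The main technical obstacle is the trapping step. On $\partial\Omega$ the formulas (2.3) show that $X^+$ and $X^-$ degenerate to a common limit as $\Delta \to 0$, so the precise formulation of ``transversality to $\Gamma_i$'' that makes the uniform inward collar work has to be extracted carefully from the assumption stated at the start of Section 6. Secondarily, if $X^\pm$ admits stationary points in $\Omega$, the Poincar\'e-Bendixson conclusion weakens to an $\omega$-limit set built from fixed points and heteroclinic connections; one would then need either an additional non-degeneracy hypothesis, or an argument that the resulting invariant graphic still contains a closed orbit qualifying as a horizon in the sense of Lemma \ref{horizonlemma}.
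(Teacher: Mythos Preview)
Your proposal is correct and follows essentially the same approach as the paper: a trajectory of the family that starts on both boundary components is trapped in the annulus, Poincar\'e--Bendixson yields a limit cycle, and Lemma~\ref{horizonlemma} identifies it as a horizon, with the other family handled by time reversal. Your version is considerably more careful than the paper's own argument, which simply asserts that such a trajectory ``cannot end on $\Gamma_2$'' and invokes Poincar\'e--Bendixson directly, without discussing collars, boundary degeneracy of $X^\pm$, or the absence of stationary points.
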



In the case where $\plus$ starts on $\Gamma_1$ and ends on $\Gamma_2$ there may be no event horizons produced by either family. The situation could be as in Section 4. 

The total contribution is the sum of the contributions of the $\plus$ and $\minus$ family. 

Thus, we have

\begin{theorem}
If the $\plus$ family starts on one ergosphere and ends on the other, then it may be that  the $\plus$ family  does not generate any event horizon. 
In this case  $\minus$   family also starts  on one ergosphere  and ends  on another.  Thus it  can be  no event  horizon produced by $\minus$  family.
\end{theorem}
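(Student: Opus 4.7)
My plan is to prove Theorem~6.2 in two short steps: the first is a bookkeeping consequence of the transversality hypothesis, and the second is a symmetric application of the Poincar\'e--Bendixson reasoning that underlies Theorem~6.1.

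\textbf{Step 1 (pattern of the $-$ family).} By the transversality assumption stated at the beginning of Section~6, on each ergosphere $\Gamma_i$ exactly one of the families points inward (``starts'' there) and the other points outward (``ends'' there). Hence if $+$ starts on $\Gamma_1$ and ends on $\Gamma_2$, then on $\Gamma_1$ the $-$ family must end and on $\Gamma_2$ it must start. In particular, the $-$ family also starts on one ergosphere and ends on the other, with the roles of ``start'' and ``end'' swapped relative to~$+$.

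\textbf{Step 2 (no forced horizon).} In Theorem~6.1 the crucial fact was that trajectories of a family that starts on \emph{both} $\Gamma_1$ and $\Gamma_2$ are trapped in $\Omega$ in forward time, so Poincar\'e--Bendixson produces a limit cycle that, by Lemma~2.1, is a horizon; the symmetric argument run in reverse time handles the family that ends on both. In the present configuration neither family is of this form: each $+$ trajectory can in principle transit from $\Gamma_1$ to $\Gamma_2$, and symmetrically each $-$ trajectory from $\Gamma_2$ to $\Gamma_1$. Thus no trapping argument forces a limit cycle for either family, and the same reasoning that lets $+$ fail to produce a horizon applies verbatim to $-$. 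The acoustic examples of Section~4 exhibit explicit instances where no limit cycles arise, confirming the conclusion is non-vacuous.

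The main subtlety is that, with the $t$-parametrization used for $X^{\pm}$, the fields actually vanish on $\partial\Omega$ because $\sqrt{-\Delta}\to 0$ there; thus ``transversal to the boundary'' must be interpreted at the level of the underlying bicharacteristic curves after a suitable reparametrization, under which the dichotomy of Step~1 becomes a genuine statement about inward- versus outward-pointing tangent directions at each boundary point.
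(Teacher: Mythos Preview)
Your proposal is correct and follows essentially the same approach as the paper: the paper's argument for Theorem~6.2 consists of the transversality bookkeeping (stated just before and within the theorem) together with the reference to the acoustic examples of Section~4 (the $\minus$ family in Theorem~4.1 is precisely the instance where a family starts on one ergosphere, ends on the other, and produces no limit cycle). Your Step~1 spells out the dichotomy explicitly and your Step~2 makes the contrast with the Poincar\'e--Bendixson trapping of Theorem~6.1 more transparent than the paper does, but the substance is the same; the closing remark about the $\sqrt{-\Delta}$ degeneration at $\partial\Omega$ is a valid caveat that the paper leaves implicit.
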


Now we shall  compare  Theorems 6.1 and 6.2 with the results of \S 4. 
We start first with Theorem 4.3 where $A < 0$ and $B(\erho)$ changes sign: Say $B(\erho_0) = 0$, with $B(\erho) > 0$ when $\erho_- < \erho < \erho_0$, $B(\erho) <0$ when $\erho_0 < \erho < \erho_+$. Consider the $\plus$ family, for which we have (cf.  (4.5))
\begin{align}\label{6.1}
\frac{d\erho^+}{dt} \approx \frac{-B^2\sqrt{A^2+B^2-1}}{A\sqrt{A^2+B^2-1} -B} \approx B\sqrt{A^2 + B^2 -1} > 0
\end{align}
near $\erho = \erho_-$ since $A^2 + B^2 - 1 \approx 0,\ B>0$. 
Therefore $\erho_+(t)$ starts at $\erho = \erho_-$. Also $\frac{d\erho^+}{dt} < 0$ near $\erho = \erho_+$ since  $B<0$  near  $\rho=\rho_+$.

Thus $\erho_+(t)$ starts on both ergospheres at $\erho = \erho_-$, $\erho = \erho_+$. Hence the conditions of Theorem 6.1 are satisfied and so there are event horizons corresponding to both the $\plus$ and $\minus$ families. This was directly argued in Theorem 4.3. 

Consider now the case when in (4.5) we have $B > 0$ on $[\erho_-,\erho_+]$. Then it follows from (6.1)
 that $\frac{d\erho^+}{dt} > 0$ near $\erho = \erho_-$ and $\frac{d\erho^+}{dt} > 0$ near $\erho = \erho_+$. Therefore the $\plus$  family starts 
  at  $\rho=\rho_-$  and  ends at  $\rho=\rho_+$.  Also
$\minus$ family  ends at  $\rho=\rho_-$  and starts  at $\rho=\rho_+$.  Therefore   the conditions  of Theorem 6.2  are satisfied.
The results of Theorem 6.2  are in agreement  with the result of Theorem 4.1.

Now we shall  describe,  for example,  the phase portrait  of $\plus$  family  in $\Omega$.

Consider first the case of Theorem 4.3 or more generally of Theorem 6.1 with one $\plus$ horizon only.  Thus we have the event horizon $\erho = \erho_1$ 
where $\erho_- < \erho_1 < \erho_0$  and $A(\erho_1) = -1$, $B(\erho_1) > 0$. It follows from (4.5) at $\erho = \erho_1$ that 
\begin{align}\label{6.3}
\frac{d\ephi^+}{dt} = \frac{1}{\erho^+} \frac{B(\erho_1)(-B(\erho_1) - B(\erho_1))}{-B(\erho_1)-B(\erho_1)} = \frac{1}{\erho_1}B(\erho_1). 
\end{align}
Thus $\left.\frac{d\ephi^+}{dt}\right|_{\erho = \erho_1} > 0$. This means that the periodic $\plus$ trajectory $\erho = \erho_1$ is traversed counterclockwise as $t$ increases. Any $\plus$ trajectory $\gamma_1^+$ starting at $\erho = \erho_-$ approaches the horizon $\erho = \erho_1$ spiraling counterclockwise. Analogously any $\plus$ trajectory $\gamma_2^+$ starting at $\erho = \erho_+$ will also approach the horizon $\erho = \erho_1$ spiraling counterclockwise. 

Consider now the more involved case of $\plus$ trajectories in the conditions of Theorem 4.1. We have two event horizons $\erho_- < \erho_1 < \erho_2 < \erho_+$. As in (4.1) we have $\frac{d\ephi^+}{dt} > 0$ when $\erho = \erho_1$, i.e. the periodic trajectory $\erho = \erho_1$ is traversed counterclockwise as $t$ increases. At $\erho = \erho_2$ we have also $\frac{d\ephi^+}{dt} = \frac{1}{\erho_2}B(\erho_2) > 0$ since $B(\erho) > 0$ for all $\erho_- < \erho < \erho_+$. Therefore the horizon $\erho = \erho_2$ is also traversed counterclockwise.   Let $\gamma_0^+$ be any $\plus$ trajectory starting at $\erho = \erho_-$. 
It will approach $\erho = \erho_1$ spiraling counterclockwise as $t \to +\infty$. Consider any $\plus$ trajectory $\gamma_2^+$ ending at $\erho = \erho_+$. 
It will approach $\erho = \erho_2$ as $t \to -\infty$, spiraling clockwise around the horizon $\erho = \erho_2$. Note that $\erho = \erho_2$ is traversed counterclockwise as $t\to +\infty$ and the direction is reversed when $t\to -\infty$. Consider any $\plus$ trajectory between $\erho = \erho_1$ and $\erho = \erho_2$. It approaches $\erho = \erho_1$ when $t \to +\infty$ spiraling 
counter clockwise
around $\erho = \erho_1$ and approaches $\erho = \erho_2$ when $t \to -\infty$ spiraling clockwise around $\erho = \erho_2$.

\section{Characteristic ergospheres}

Consider a domain $\Omega$ between two ergospheres $\Gamma_1$ and $\Gamma_2$ that are also event horizons. Note that $\Gamma_1$ and $\Gamma_2$ are characteristic curves for the spatial part of $\Box_g$, but there do not exist null geodesics which travel around them. It was shown
 in [4] that $\plus$ and $\minus$ families approach $\Gamma_1$ and $\Gamma_2$ when $t \to +\infty$ and $t \to -\infty$. Consider, for example, 
 the $\plus$ family. Suppose for definiteness that any $\plus$ null-geodesic approaches $\Gamma_1$ as $t \to -\infty$ and approaches $\Gamma_2$ 
 as $t \to +\infty$. For short, we will say that $\gamma_+$ ``starts'' on $\Gamma_1$ and $\gamma_+$ ``ends'' on $\Gamma_2$. The situation
  here is similar to that of \S 6. 

Suppose now that every null geodesic of the $\plus$ family approaches $\Gamma_1$ when $t\to +\infty$ and also approaches $\Gamma_2$ when 
$t \to +\infty$, i.e. ``ends'' on $\Gamma_1$ and ``ends'' on $\Gamma_2$. Then the null geodesics of the $\minus$ family ``start'' on $\Gamma_1$ and
 $\Gamma_2$. Then as in \S 6 (cf. Theorem 6.1) using the Poincar\' e-Bendixson theorem we obtain that there exists at least one horizon belonging 
 to the $\plus$ family and one horizon belonging to the $\minus$ family. 

In the case when null-geodesics of one family ``start'' on one ergosphere and ``end'' on the other, there may be no event horizon (cf. Theorem 6.2). 

As  an  example  of two characteristic  ergospheres  consider  the Hamiltonian (1.6),  i.e.  the reduction  of  $3 + 1$  dimensional  Kerr metric  when
$\xi_\varphi=0$  and $\varphi$  is  a constant.  It was  shown  in [3]  that  $r=r_+$   and  $r=r_-$  are two ergospheres  where 
$r_\pm = m\pm\sqrt{m^2-a^2},\ 0<a<m,$  and  $r$  is defined  as  in  (1.4).   Since  $r=r_+$  and  $r=r_-$   are event horizons  in  $3+1$  Kerr metric  they are  
also  event horizons in $2+1$  reduction.  Since  there are no event  horizons between  $r=r_-$  and   $r=r_+$   the null-geodesics  of one   family  travels
from  $r_-$   to  $r_+$  when  $t$  changed  from  $-\infty$  to  $+\infty$  and  for another  family  it travels  from  $r_+$  to $r_-$  when
$-\infty<t<+\infty$. 
\begin{remark} In this paper we consider the case when the ergosphere is either not characteristic at any point or totally characteristic. In this case, event horizon curves belong to either the $\plus$ or $\minus$ family. Therefore the total set of event horizons is a sum of the contributions of the $\plus$ and $\minus$ families. In the general case some points of each ergosphere can be characteristic (cf. [4]). Then as in [4] the event horizon consists of curves belonging to either the $\plus$ or $\minus$ family. Not that as in [4] it may happen that an event horizon has corners as the result of intersecting arcs from different families.
\end{remark}

\newcommand{\etalchar}[1]{$^{#1}$}


\begin{thebibliography}{RMM{\etalchar{+}}10}



\bibitem[1]{BarceloLiberatiVisser}                                     
Carlos Barcel{\'o}, Stefano Liberati, Matt Visser, et~al.
\newblock Analogue gravity.
\newblock {\em Living Rev. Rel}, 8(12):214, 2005.




\bibitem[2]{EskinInvHyp}                                                                     
Gregory Eskin.
\newblock Inverse hyperbolic problems and optical black holes.
\newblock {\em Comm. Math. Phys.}, 297(3):817--839, 2010.


\bibitem[3]{EskinInvKerr}                                                               
Gregory Eskin.
\newblock Perturbations  of the  Kerr black  hole  and  boundness of linear waves.
\newblock {\em J. Math. Phys.}, 57(11):112501, 2010.


\bibitem[4]{EskinHall}                                                                     
Gregory Eskin, Michael Hall,
\newblock  Stationary black hole metrics and inverse  problems in two space dimensions
\newblock {\em Inverse Problems},  32, 095006, 2016


\bibitem[5]{Gordon}                                                                  
Walter Gordon.
\newblock Zur lichtfortpflanzung nach der relativitätstheorie.
\newblock {\em Annalen der Physik}, 377(22):421--456, 1923.


\bibitem[6]{Hartman}                                                                 
E. Hartman.
\newblock Ordinary differential  equations
\newblock{\em New York,  J. Woley \& Son}, 1964


\bibitem[7]{Kerr}                                             				
R. Kerr.
\newblock Gravitational  field  of a spinnig mass as an example  of  algebraically special metric.
\newblock {\em Phys. Rev. Lett.} 11, 237,  1963

\bibitem[8]{LP}                                                                          
Ulf Leonhardt and Paul Piwnicki.
\newblock Relativistic effects of light in moving media with extremely low
  group velocity.
\newblock {\em arXiv}: 9906332, 1999.

\bibitem[9]{NovelloVisserVolovik}                                           
M{\'a}rio Novello, Matt Visser, and Grigory~E Volovik.
\newblock {\em Artificial black holes}.
\newblock World Scientific Publishing Company, 2002.

\bibitem[10]{Unruh}                                                            
William~George Unruh.
\newblock Experimental black-hole evaporation?
\newblock {\em Phys. Rev. Lett.}, 46:1351--1353, May 1981.

\bibitem[11]{VisserAcoustic}                                               
Matt Visser.
\newblock Acoustic black holes: horizons, ergospheres and {H}awking radiation.
\newblock {\em Classical Quantum Gravity}, 15(6):1767--1791, 1998.

\bibitem[12]{VisserKerr}							
Matt Visser.
\newblock The Kerr space-time: A brief introduction.
\newblock  {\em arXiv}: 0706.0622, 2007

\bibitem[13]{Wald2010}                                                       
Robert~M Wald.
\newblock {\em General relativity}.
\newblock University of Chicago press, 2010.


\end{thebibliography}
\end{document}